\theoremstyle{definition}
\theoremstyle{plain}
\newtheorem{theorem}{Theorem}[section]
\newtheorem{proposition}[theorem]{Proposition}
\newtheorem{lemma}[theorem]{Lemma}
\newtheorem{paragItalic}[theorem]{}
\theoremstyle{definition}
\newtheorem{definition}[theorem]{Definition}
\newtheorem{parag}[theorem]{}
\newtheorem{notation}[theorem]{Notation}
\theoremstyle{remark}
\newenvironment{enumerata}%
{\begin{enumerate}

}{\end{enumerate}}
\newenvironment{Enumerata}[1]%
{\begin{enumerata}\setlength{\itemsep}{#1}}{\end{enumerata}}
\newcommand{\supp}{	\operatorname{{\rm supp}}}
\newcommand{\Div}{	\operatorname{{\rm Div}}}
\newcommand{\Bs}{	\operatorname{{\rm Bs}}}
\newcommand{\setspec}[2]{\big\{\,#1\, \mid \,#2\, \big\}}
\newcommand{\Integ}{\ensuremath{\mathbb{Z}}}
\newcommand{\Nat}{\ensuremath{\mathbb{N}}}
\newcommand{\proj}{\ensuremath{\mathbb{P}}}
\newcommand{\bk}{{\ensuremath{\rm \bf k}}}
\newcommand{\bbL}{\ensuremath{\mathbb{L}}}
\newcommand{\bbG}{\ensuremath{\mathbb{G}}}
\newcommand{\Keul}{\EuScript{K}}
\newcommand{\isom}{\cong}
\renewcommand{\epsilon}{\varepsilon}
\renewcommand{\phi}{\varphi}
\renewcommand{\emptyset}{\varnothing}
\newcommand{\barr}{\overline}
\newcommand{\rien}[1]{}
\begin{document}
\renewcommand{\baselinestretch}{1.07}


\title[Linear systems of rational curves on rational surfaces]
{Linear systems of rational curves\\
 on rational surfaces}

\author{Daniel Daigle, Alejandro Melle-Hern\'andez}

\date{\today}

\address{Department of Mathematics and Statistics\\ University of Ottawa\\ Ottawa, Canada\ \ K1N 6N5}

\address{ICMAT (CSIC-UAM-UC3M-UCM) Dept.\ of Algebra, 
Facultad de Ciencias Matem\'aticas, Universidad Complutense, 28040, Madrid, Spain}

\email{ddaigle@uottawa.ca}\email{amelle@mat.ucm.es} 

\thanks{Research of the first author supported by a grant from NSERC Canada.}
\thanks{Research of the second author supported by the grants MTM2010-21740-C02-01 and
Grupo Singular CCG07-UCM/ESP-2695-921020.}

{\renewcommand{\thefootnote}{}
\footnotetext{2010 \textit{Mathematics Subject Classification.}
Primary: 14C20.  Secondary: 14J26.}}

\dedicatory{Dedicated  to the memory of Vladimir Igorevich Arnold}


\maketitle
  
\vfuzz=2pt

Let $S$ be a projective nonsingular rational surface, over an algebraically closed field of characteristic zero.
We say that a linear system $\bbL$ on $S$ is \textit{rational\/}
if $\dim\bbL\ge1$ and the general member of $\bbL$ is
an irreducible rational curve.

Let $C \subset S$ be an irreducible curve.

Let $\Omega_{C}$ be the set of rational linear systems $\bbL$ on $S$ satisfying $C \in \bbL$.
Consider the minimal\footnote{This is the ``short'' resolution, not the ``embedded'' resolution.
See \ref{diouf90qwejdk} for details.} resolution of singularities $\pi : \widetilde S \to S$ of $C$,
let $\widetilde C$ be the strict transform of $C$ on $\widetilde S$,
and let ${\tilde \nu}(C)$ denote the self-intersection number of $\widetilde C$ in $\widetilde S$.
Then Theorem~\ref{yyvyvyvyvydyddsdfy} implies:
\begin{enumerate}

\item {\it $\Omega_C \neq \emptyset$ if and only if $C$ is rational and ${\tilde \nu}(C) \ge 0$.}

\end{enumerate}

Let $\bbL_C$ be the linear system on $S$ which is the image of $| \widetilde C |$
 by $\pi_*: \Div(\widetilde S)\to \Div(S)$
(so $C \in \bbL_C$).
Assuming that $\Omega_C \neq \emptyset$,
we show (Thm~\ref{yyvyvyvyvydyddsdfy}):

{\it
\begin{enumerate}
\addtocounter{enumi}{1}

\item For any linear system $\bbL$ on $S$,\ \  
$\bbL \in \Omega_C$ $\iff$ $\dim\bbL \ge1$ and $C \in \bbL \subseteq \bbL_C$.

\end{enumerate}}
This gives a complete description of $\Omega_C$, and we note in particular 
that $\Omega_C$ has a greatest element (namely, $\bbL_C$).
Continuing to assume that $\Omega_C \neq \emptyset$,
Theorem~\ref{yyvyvyvyvydyddsdfy} also shows that
$\dim\bbL_C = {\tilde \nu}(C)+1$ and that 
the minimal resolution of singularities of $C$
coincides with the minimal resolution of the base points of $\bbL_C$.

\medskip
The present paper may be viewed as a preamble to the forthcoming \cite{Dai_Mell_2},
in which we study linear systems associated to unicuspidal rational curves $C \subset \proj^2$. 
We remind the reader that all currently known curves of that type satisfy ${\tilde \nu}(C) \ge 0$, hence $\Omega_C \neq \emptyset$.
It is shown in \cite{Dai_Mell_2} that
if $C \subset \proj^2$ is a unicuspidal rational curve with singular point $P$ then:
(1) there exists a unique pencil $\Lambda_C$ on $\proj^2$
satisfying $C \in \Lambda_C$ and $\Bs( \Lambda_C ) = \{P\}$;
(2) $\Lambda_C$ is a rational pencil if and only if ${\tilde \nu}(C)\ge0$;
(3) if ${\tilde \nu}(C)\ge0$, then $\Lambda_C$ has a dicritical of degree $1$.
From these facts, the paper then derives consequences for the classification of 
unicuspidal rational curves $C \subset \proj^2$ satisfying ${\tilde \nu}(C) \ge 0$.

\bigskip
We would like to express our thanks to Professor Dolgachev, and also to the referee,
for their useful comments.

\bigskip
\noindent{\bf Conventions.} 
All algebraic varieties are over an algebraically closed field $\bk$ of characteristic zero.
Varieties are irreducible and reduced, so in particular all curves are irreducible and reduced. 
A divisor $D$ of a surface is {\it reduced\/} if $D = \sum_{i=1}^n C_i$ where $C_1, \dots, C_n$
are distinct curves ($n\ge0$). 


\section{Clusters on a surface}
\label{sdifrqiowekf}

We fix a projective nonsingular surface $S$ throughout this section.
We consider the set $S^*$ of points which are either 
points of $S$ or points infinitely near points of $S$.
The set $S^*$ comes equipped
with a partial order $\le$, called the \textit{natural order},
such that for $P,Q \in S^*$ we have $P<Q$ if and only if
$Q$ is infinitely near $P$.
The minimal elements of $(S^*,\le)$ are called the \textit{proper points\/}
of $S$, and are indeed in bijective correspondence with
the closed points of $S$. 
Note that the poset $(S^*,\le)$ is a classical object
(for instance it is called a ``bubble space'' in \cite{manin74} but has the order relation reversed).

A \textit{cluster\/} on $S$ is a (possibly empty)
finite subset $K \subset S^*$ such that,
given any $P,Q \in S^*$, if $P\le Q$ and $Q \in K$ then $P \in K$.
If $K$ is a cluster on $S$ then a \textit{subcluster\/} of $K$ is
any subset of $K$ which is itself a cluster on $S$.
Note that if $K$ is a cluster on $S$ then each minimal element of $K$
is a proper point of $S$.

The aim of this section is to fix the notations and terminologies for clusters
and to recall certain facts in that theory---there are no new results here.
Our main reference is the first chapter of \cite{Alberich},
and our notations and definitions are in general compatible with that text.

\begin{parag}  \label{;;;;;;;;;;fwo9eirij9}
Let $K$ be a cluster on $S$.
\begin{enumerata}

\item The \textit{blowing-up of $S$ along $K$} is denoted
$\pi_K : S_K \to S$.
Observe that if $K'$ is a subcluster of $K$ then
$K \setminus K'$ is a cluster on $S_{K'}$ and $\pi_K$ factors as
\begin{equation}  \label{drq293123q9uksdfjaw9e}
S_K \xrightarrow{ \pi_{K \setminus K'} }  S_{K'} \xrightarrow{ \pi_{K'} } S.
\end{equation}

\item Given a divisor $D \in \Div(S)$,
let $\widetilde D^K \in \Div(S_K)$ and $\barr D^K \in \Div(S_K)$ denote,
respectively, the strict transform and total transform of $D$ on $S_K$.

\item If $\bbL$ is a linear system on $S$ without fixed components and
 such that $\dim \bbL \ge1$,
let $\widetilde \bbL^K$ denote the strict transform of $\bbL$ on $S_K$.

\item Given $P \in K$,
one can define the corresponding exceptional curve $E_P$ as follows.
Consider the subcluster $K' = \setspec{ x \in K }{ x \le P }$ of $K$
and factor $\pi_K$ as in \eqref{drq293123q9uksdfjaw9e}.
Then $E_P \subset S_{K'}$ is the unique irreducible component of
the exceptional locus of $\pi_{K'}$ with self-intersection $(-1)$.
The strict transform (resp.\ total transform) of $E_P$ on $S_K$ is denoted
$\widetilde E_P^K \subset S_K$  (resp.\ $\overline E_P^K \in \Div( S_K )$).

\end{enumerata}
\end{parag}

\begin{parag}  \label{djfieieioqopalld}
\begin{enumerata}

\item \label{d99dr9g7g6g6hjg75hj7c478}
Given $P \in S^*$,
consider the blowing-up $\pi_{K^P} : S_{K^P} \to S$ of $S$
along the cluster $K^P = \setspec{ x \in S^* }{x < P}$, and note that
$P$ is a proper point of $S_{K^P}$.

\item Given $P \in S^*$ and a curve $C \subset S$,
let $e_P(C) \in \Nat$ denote the multiplicity of $C$ at $P$
(by definition, this is the multiplicity of
the curve $\widetilde C^{K^P} \subset S_{K^P}$ at the proper point
$P$ of $S_{K^P}$).
Extending linearly,
let $e_P(D) \in \Integ$ denote the multiplicity of a divisor $D \in \Div(S)$
at $P$. 

\item Given $P \in S^*$ and a linear system $\bbL$ on $S$
without fixed components and such that $\dim \bbL \ge1$, let
$e_P( \bbL ) \in \Nat$ denote the multiplicity of $\bbL$ at $P$
(by definition,
$ e_P( \bbL ) = \min\setspec{ e_P(D) }{ D \in \widetilde\bbL^{K^P} } $).
Note that the general member $D$ of $\widetilde\bbL^{K^P}$
satisfies $e_P(D) = e_P( \bbL )$.

\end{enumerata}
\end{parag}

\begin{parag}
A \textit{weighted cluster\/} on $S$ is a pair $(K,m)$ where $K$
is a cluster on $S$ and $m : K \to \Integ$ is any set map.
If $K'$ is a subcluster of $K$ and $m' : K' \to \Integ$ is the restriction
of $m$, we call $(K',m')$ a \textit{weighted subcluster\/} of $(K,m)$.
\end{parag}

\begin{parag}  \label {diouf90qwejdk}
Consider an effective divisor $D \in \Div(S)$.
\begin{enumerata}

\item Define the set
$K^D = \setspec{ P \in S^* }{ e_P(D)>1 }$ and note
that this is a finite set if and only if $D$ is reduced.

\item Assume that $D$ is reduced.
Then $K^D$ is a cluster on $S$,
called the \textit{cluster of singular points of $D$}.
If $K^D = \emptyset$, we say that $D$ is nonsingular.
The blowing-up $\pi_{K^D}: S_{K^D} \to S$ of $S$ along $K^D$
is called the \textit{minimal resolution of singularities\/} of $D$.
For an arbitrary cluster $K$ on $S$,
\begin{equation}  \label{dofj[q293i[lka;dfk;;}
\text{$\widetilde D^K$ is nonsingular $\iff$ $K^D \subseteq K$.}
\end{equation}

\item Continue to assume that $D$ is reduced.
If $e(D) : K^D \to \Integ$ denotes the map $P \mapsto e_P(D)$ then
we call $\Keul^D = (K^D, e(D))$ the 
\textit{weighted cluster of singular points of $D$}.

\end{enumerata}
\end{parag}

\begin{parag}  \label{;dslfq23jkwadll;}
Consider a linear system $\bbL$ on $S$ such that $\dim \bbL \ge1$ and
without fixed components.
\begin{enumerata}

\item The set $K_\bbL = \setspec{ P \in S^* }{ e_P(\bbL)>0 }$
is a cluster on $S$, called the \textit{cluster of base points of $\bbL$}.
The blowing-up $\pi_{K_\bbL}: S_{K_\bbL} \to S$ of $S$ along $K_\bbL$
is called the \textit{minimal resolution of the base points\/} of $\bbL$.
For an arbitrary cluster $K$ on $S$,
\begin{equation}  \label {sdkfjpq92391`]203'p;;;1}
\text{$\widetilde \bbL^K$ is base-point-free $\iff$ $K_\bbL \subseteq K$.}
\end{equation}
Let us also observe the following property of $K_{\bbL}$:
\begin{equation}  \label{sdfjpqw88888}
\begin{minipage}[t]{.8\textwidth}
For each $P \in K_\bbL$, if
$(\widetilde E_P^{K_{\bbL}})^2 = -1$ in $S_{K_\bbL}$ then 
$\widetilde E_P^{K_{\bbL}}$ is a horizontal curve (i.e., is not
included in the support of an element of $\widetilde\bbL^{K_{\bbL}}$).
\end{minipage}
\end{equation}

\item If $e(\bbL) : K_\bbL \to \Integ$ denotes the map
$P \mapsto e_P(\bbL)$ then
we call $\Keul_\bbL = (K_\bbL, e(\bbL))$ the
\textit{weighted cluster of base points of $\bbL$}.

\item We write $\Bs(\bbL) = \setspec{Q \in S}{ e_Q( \bbL ) > 0 }$ for
the \textit{base locus\/} of $\bbL$. Note that this is a set of proper
points of $S$, and is the set of minimal elements of $K_\bbL$.

\end{enumerata}
\end{parag}

\begin{parag}  \label {dkfjpqw9eklasdj}
Let $\Keul = (K,m)$ be a weighted cluster on $S$ and $D$ a 
divisor on $S$.
Let us use the notation $m = ( m_P )_{ P \in K }$ for the map $m$
and let $\pi_K : S_K \to S$ be the blowing-up of $S$ along $K$.
\begin{enumerata}

\item  The \textit{virtual transform of $D$ with respect to $\Keul$} is 
the divisor $\check D^{\Keul} \in \Div( S_K )$ defined by:
$$
\check D^{\Keul} = \barr D^{K} - \sum_{ P \in K } m_P \barr E_P^{K} .
$$

\item We say that \textit{$D$ goes through $\Keul$} if 
$\check D^{\Keul}$ is an effective divisor.
Note that if $D$ goes through $\Keul$ then $D$ is effective.

\item We say that \textit{$D$ goes through $\Keul$ effectively} if 
the following equivalent conditions are satisfied:
\begin{itemize}

\item $D$ is effective and  $e_P(D) = m_P$ for all $P \in K$

\item $D$ goes through $\Keul$ and $e_P(D) = m_P$ for all $P \in K$

\item $D$ is effective and $\check D^{\Keul} = \widetilde D^K$.

\end{itemize}
\end{enumerata}
\end{parag}

We leave it to the reader to verify assertions \ref{aaaaa8d7g63}--\ref{dfiupqwe;lkdj}, below.
To prove \ref{dfiupqwe;lkdj}(b), one uses characteristic zero Bertini Theorem.

\begin{parag} \label {aaaaa8d7g63}
\it 
Let $\Keul$ be a weighted cluster on $S$ and $D \in \Div(S)$.
If $D$ goes through $\Keul$, then $D$ goes through every weighted subcluster of $\Keul$.
\end{parag}

\begin{parag} \label {7vv6brrnfh}
\it 
Let $\Keul = (K,m)$ be a weighted cluster on $S$ and $D \in \Div(S)$.
Suppose that $D$ goes through $\Keul$, and that $e_P(D) \le m_P$ for all $P \in K$.
Then $e_P(D) = m_P$ for all $P \in K$.
\end{parag}

\begin{paragItalic}  \label {;sdkfj9238923-iklj}
Let $\bbL$ be a linear system on $S$ without fixed component and
such that $\dim\bbL \ge 1$.
For any $D \in \bbL$ and any cluster $K$ on $S$, the following are equivalent:
\begin{enumerata}

\item $\widetilde D^K \in \widetilde \bbL^K$

\item $e_P(D) = e_P( \bbL )$ for all $P \in K$

\item $e_P(D) \le e_P( \bbL )$ for all $P \in K$

\item $D$ goes through the weighted cluster $(K, e(\bbL))$ effectively,
where $e(\bbL) : K \to \Integ$ denotes the set map $P \mapsto e_P(\bbL)$.

\end{enumerata}
\end{paragItalic}

\begin{notation}  \label{fiouqp9384-9012}
If $\Keul = (K,m)$ is a weighted cluster, 
let $\Keul^{(>1)} = (K',m')$ be the pair 
defined by setting $K' = \setspec{ P \in K }{ m(P)>1 }$ and by
letting $m' : K' \to \Integ$ be the restriction of $m : K \to \Integ$ to $K'$.
\end{notation}

\begin{paragItalic}  \label {dfiupqwe;lkdj}
Let $\bbL$ be a linear system on $S$
without fixed component and such that $\dim\bbL \ge 1$.
\begin{enumerata}

\item  For any $D \in \bbL$, the following are equivalent:
\begin{enumerata}

\item $\Keul^D = \Keul_\bbL^{(>1)}$

\item $\widetilde D^{K_\bbL} \in \widetilde \bbL^{K_\bbL}$
and $\widetilde D^{K_\bbL}$ is nonsingular.

\end{enumerata}

\item \label{48993wg7gj5l3k}
The general member $D$ of $\bbL$ satisfies \mbox{\rm (a-i)} and \mbox{\rm (a-ii)},
and goes through $\Keul_\bbL$ effectively.

\end{enumerata}
\end{paragItalic}


\section{Rational linear systems on rational surfaces}

In this section, $S$ is a rational nonsingular projective surface.

\begin{definition} \label{88800112h}
A linear system $\bbL$ on $S$ is \textit{rational\/}
if $\dim\bbL\ge1$ and the general member of $\bbL$ is a rational curve.
\end{definition}

Given a curve $C \subset S$, it is interesting to ask whether there exists
a rational linear system $\bbL$ on $S$ satisfying $C \in \bbL$.
In this section we show that
the existence of $\bbL$ is equivalent to $C$ being rational and
of nonnegative type (cf.~\ref{minimalresolsing}).
When $C$ satisfies these conditions, we describe all rational linear systems containing $C$.

We begin by recalling some known facts (\ref{jhdhsgghgaqaqoaiaaqo} and \ref{dskjfwejf;akj}).

\begin{definition}  \label{kd545342100ko}
A pencil $\Lambda$ on $S$ is called a \textit{$\proj^1$-ruling\/}
if it is base-point-free and if its general member is isomorphic to a
projective line.
\end{definition}

The following fact is a consequence of a well-known result of Gizatullin
(see for instance \cite[Chap.~2, 2.2]{Miy_RatUnirat} or  \cite[Sec.~2]{Giz70}).
Note that Gizatullin's result is stronger than \ref{jhdhsgghgaqaqoaiaaqo},
as we are only stating the part of the result which we need.

\begin{lemma}[Gizatullin] \label {jhdhsgghgaqaqoaiaaqo}
Let $\Lambda$ be a $\proj^1$-ruling on $S$ and let $D \in \Lambda$.
\begin{enumerata}

\item Each irreducible component of $D$ is a nonsingular rational curve.

\item If $\supp(D)$ is irreducible then $D$ is reduced.

\item If $\supp(D)$ is reducible then there exists a $(-1)$-component $\Gamma$
of $\supp(D)$ which meets at most two other components of $\supp(D)$;
moreover, if $\Gamma$ has multiplicity $1$ in the divisor $D$ then
there exists another $(-1)$-component
of $\supp(D)$ which meets at most two other components of $\supp(D)$.

\end{enumerata}
\end{lemma}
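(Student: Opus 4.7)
The plan is to use that the base-point-free pencil $\Lambda$, whose general member is $\cong\proj^1$, defines a morphism $f\colon S\to\proj^1$ whose scheme-theoretic fibers are exactly the members of $\Lambda$. In particular each $D\in\Lambda$ is a fiber of $f$; the general fiber is a smooth rational curve of self-intersection $0$, and flatness of $f$ forces every fiber to have arithmetic genus $0$. All three assertions then follow from the classical structure theory of such fibrations.

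For (a), the key input is that every fiber of $f$ is an SNC tree of smooth rational curves. This is proved by induction along the factorization of $f$ through its relative minimal model, which exists by the Noether--Enriques theorem: starting from a smooth $\proj^1$-fibration (where every fiber is a smooth $\proj^1$), each subsequent blow-up of a point on an SNC fiber produces another SNC tree all of whose components are smooth rational. In particular every component of $D$ is a smooth rational curve. For (b), writing $D=nC$ with $C$ irreducible and applying adjunction gives $p_a(nC)=n(p_a(C)-1)+1$; combined with $p_a(D)=0$ and $p_a(C)\ge 0$ this forces $n=1$, so $D$ is reduced.

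For (c), the same factorization through the relative minimal model shows that the reducible $D$ contains at least one $(-1)$-component $\Gamma$, namely any one contracted at the last step. Contracting such a $\Gamma$ lands on a smooth surface whose image fiber is still SNC, so the neighbors of $\Gamma$ in $D$ cannot all coalesce at a single point; this forces $\Gamma$ to have at most two neighbors. For the second assertion, assume $\Gamma$ has multiplicity $1$: the equalities $D\cdot\Gamma=0$ and $\Gamma^2=-1$, combined with transversality of SNC intersections, force $\Gamma$ to meet exactly one other component $C_1$, with $n_{C_1}=1$, so $\Gamma$ is a leaf of the dual tree. I would then induct on the number of components of $D$: after contracting $\Gamma$ to obtain a smaller fiber $D'$, apply the first part of (c) to $D'$ to obtain a $(-1)$-component $\Gamma'$ of $D'$ with at most two neighbors. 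If $\Gamma'$ is not the image of $C_1$, its strict transform in $S$ is a second $(-1)$-component of $D$ with the same neighbors; if $\Gamma'$ is the image of $C_1$, apply the inductive hypothesis (the multiplicity-$1$ half of (c)) to $D'$ and $\Gamma'$ to produce yet another $(-1)$-component of $D'$, which necessarily differs from the image of $C_1$ and lifts to the desired component of $D$.

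The main obstacle is this last piece of bookkeeping: a $(-1)$-component of $D'$ lifts to a $(-1)$-component of $D$ only when it is not the image of $C_1$ (contracting $\Gamma$ shifts the self-intersection of $C_1$ by $+1$), so the induction must be arranged to produce such a component, which is why one invokes the multiplicity-$1$ statement inductively on $D'$. Termination is guaranteed by the strict decrease in the number of components, with the base case $D$ of two components handled by a direct calculation showing that both components are then $(-1)$-curves meeting only each other.
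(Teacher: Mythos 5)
Your proof is essentially correct, but note that the paper does not prove this lemma at all: it is stated as a known consequence of Gizatullin's theorem, with references to \cite{Giz70} and \cite[Chap.~2, 2.2]{Miy_RatUnirat}. So there is nothing in the paper to compare against except the citation; what you have written is the standard structure theory of genus-$0$ fibrations, carried out from scratch. The skeleton is sound: factoring $f$ through a relatively minimal model (a $\proj^1$-bundle, by Noether--Enriques), observing that total transforms of fibers stay SNC trees of smooth rational curves under point blow-ups, using flatness to get $p_a(D)=0$ for every fiber, and running the contraction/induction for (c). A few small points to tighten. In (b), the identity $p_a(nC)=n(p_a(C)-1)+1$ requires $C^2=0$, which you should extract explicitly from $0=D^2=n^2C^2$ before applying adjunction. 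In (c), the step ``the neighbors of $\Gamma$ cannot all coalesce'' silently invokes the SNC property of the fiber of the \emph{contracted} fibration $f'\colon S'\to\proj^1$, so you should say that $S'$ is again a smooth surface with a $\proj^1$-ruling and that the structure theorem from (a) applies to it; similarly the induction in the second half of (c) needs the remark that $\sigma_*D$ is the fiber of $f'$ and that multiplicities of non-contracted components are preserved, so that $C_1'$ indeed has multiplicity $1$ in $D'$ when you invoke the inductive hypothesis. You also use connectedness of fibers (for the dual graph to be a tree and for $C_1\cdot C_2\ge1$ in the base case); this follows from Stein factorization but deserves a word. With those details filled in, the argument is complete and proves the slightly stronger statement that \emph{every} $(-1)$-component of a fiber has at most two neighbors and \emph{every} multiplicity-one $(-1)$-component forces a second one, which is exactly what the paper needs in Proposition~\ref{dpfioupq9034r2093iej}.
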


\begin{lemma} \label {dskjfwejf;akj}
Consider $C \subset S$ such that $C \isom \proj^1$ and $C^2 \ge 0$.
\begin{enumerata}

\item $\dim |C| = C^2+1$ and $|C|$ is base-point-free.

\item For any linear system $\bbL$ on $S$ such that $C \in \bbL$ and $\dim\bbL\ge1$,
the general member of $\bbL$ is a nonsingular rational curve.

\item If $C^2=0$ then $|C|$ is a $\proj^1$-ruling.

\end{enumerata}
\end{lemma}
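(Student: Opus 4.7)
My plan is to establish (a) by a cohomological computation, deduce (c) by Stein-factoring the morphism associated to $|C|$, and derive (b) by combining these with Bertini and a ``composite with a pencil'' dichotomy.

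For (a), I would use the short exact sequence
\begin{equation*}
0\to\Oeul_S\to\Oeul_S(C)\to\Oeul_C(C)\to 0.
\end{equation*}
Since $S$ is rational, $H^1(\Oeul_S)=0$; since $C\isom\proj^1$, adjunction gives $\Oeul_C(C)\isom\Oeul_{\proj^1}(C^2)$. Thus $h^0(\Oeul_S(C))=1+(C^2+1)=C^2+2$, and so $\dim|C|=C^2+1$. The associated surjection $H^0(\Oeul_S(C))\twoheadrightarrow H^0(\Oeul_{\proj^1}(C^2))$, combined with the global generation of $\Oeul_{\proj^1}(n)$ for $n\ge0$, produces, for every point of $C$, a member of $|C|$ avoiding it; for points off $C$ the divisor $C$ itself works. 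Hence $|C|$ is base-point-free.

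For (c), with $C^2=0$ the pencil $|C|$ is base-point-free and defines a morphism $\phi\colon S\to\proj^1$ whose fibers are the members of $|C|$. Stein-factoring $\phi=g\circ f$, with $g\colon T\to\proj^1$ finite and $T\isom\proj^1$ (normality of $T$ plus rationality of $S$), the decomposition $\phi^{*}(t_0)=\sum_{s\in g^{-1}(t_0)}e_s\,f^{*}(s)$ applied to the irreducible, reduced fiber $C$ forces the sum to reduce to a single summand with $e_s=1$, so $\deg g=1$. Then $\phi$ has connected fibers, and its general fiber is irreducible, smooth (Bertini on the base-point-free system), and of arithmetic genus $p_a(C)=0$, hence $\isom\proj^1$.

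For (b), let $\bbL\subseteq|C|$ with $C\in\bbL$ and $\dim\bbL\ge1$. Since $C$ is smooth, $e_P(\bbL)\le e_P(C)\le 1$ at every $P\in S^*$ (by comparing with the strict transforms of $C$), so all base points of $\bbL$ are simple and supported on $C$; by \ref{dfiupqwe;lkdj} and Bertini applied off the base locus, the general member $D\in\bbL$ is smooth on $S$. For irreducibility I split cases: if $C^2=0$ then $\bbL=|C|$ and the conclusion is (c); if $C^2>0$ and $\dim\bbL=1$, I resolve the base points of $\bbL$ and replay the Stein-factorization argument of (c) on the induced morphism $\widetilde S\to\proj^1$, using that the strict transform $\widetilde C$ remains an irreducible reduced fiber to again force the finite part to have degree $1$; if $\dim\bbL\ge2$, I first observe that $\bbL$ cannot be composite with any pencil $\Lambda$, for otherwise every member of $\bbL$ would be a sum $\sum a_i\Lambda_i$ of $\Lambda$-members with fixed total $m$, and the irreducibility and reducedness of $C\in\bbL$ together with torsion-freeness of $\Pic(S)$ would force $m=1$ and $\bbL\subseteq\Lambda$, contradicting $\dim\bbL\ge2$; Bertini's irreducibility theorem then applies. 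Combined with smoothness and $p_a(D)=p_a(C)=0$, this gives $D\isom\proj^1$. The main technical hurdle is this irreducibility step of (b); everything else reduces to (a) and standard Bertini.
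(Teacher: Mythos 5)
Your argument is correct, but it is considerably longer than the paper's, and the two diverge in an instructive way on part (b). For (a) and (c) the paper simply declares the statements well known; the proofs you supply (the sequence $0\to\Oeul_S\to\Oeul_S(C)\to\Oeul_C(C)\to 0$ with $H^1(\Oeul_S)=0$, and Stein factorization of the morphism defined by the pencil) are exactly the standard ones, so nothing to compare there, except that calling $\Oeul_C(C)\isom\Oeul_{\proj^1}(C^2)$ ``adjunction'' is a misnomer --- it is just restriction plus a degree count. For (b) the paper's proof is two lines: the general member $D$ is irreducible and reduced because $\bbL$ contains the irreducible reduced member $C$ (the standard fact whose proof is precisely the composite-with-a-pencil dichotomy you spell out --- and note that torsion-freeness of $\Pic(S)$ is not needed there: $C=\sum a_i\Lambda_i$ with $C$ irreducible and reduced already forces a single summand with $a_i=1$), and then $p_a(D)=p_a(C)=0$ immediately gives that $D$ is a \emph{nonsingular} rational curve, since for an irreducible reduced curve $p_a=p_g+\delta$ with both terms nonnegative. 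This arithmetic-genus observation is what you are missing as a shortcut: it renders your entire smoothness analysis (the bound $e_P(\bbL)\le 1$, which incidentally requires a small induction through \ref{;sdkfj9238923-iklj} to justify for infinitely near points, followed by \ref{dfiupqwe;lkdj} and Bertini off the base locus) unnecessary. Your route buys nothing extra here, but it is sound; the paper's buys brevity at the cost of leaning on the unproved assertion that a linear system with one irreducible reduced member has irreducible reduced general member --- the very point where you do the honest work.
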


\begin{proof}
Assertions (a) and (c) are well known.
Let $\bbL$ be a linear system on $S$ such that $C \in \bbL$ and $\dim\bbL\ge1$, and
consider a general member $D$ of $\bbL$. Then $D$ is irreducible and reduced (because $\bbL$ has an element which
is irreducible and reduced)
and $p_a(D)=p_a(C)=0$ (because $D$ is linearly equivalent to $C$); so $D$ is a nonsingular rational curve.
\end{proof}


\medskip
Let us now turn our attention to the subject matter of this section, i.e.,
the problem of describing all rational linear systems containing a given curve.

\begin{definition} \label {minimalresolsing}
Let $C \subset S$ be a curve.
Consider the minimal resolution of singularities
$\pi = \pi_{K^C} : S_{K^C} \to S$ of $C$ (cf.\ \ref{diouf90qwejdk}),
and the strict transform $\widetilde C = \widetilde C^{K^C} \subset S_{K^C}$ of $C$.
Let ${\tilde \nu}(C)$ denote the 
self-intersection number of $\widetilde C$ in $S_{K^C}$.
If ${\tilde \nu}(C) \ge 0$, we say that $C$ is \textit{of nonnegative type}. 
We also define the set
$$
\bbL_C = \setspec{ \pi_*(D) }{ D \in | \widetilde C | }
$$
where $\pi_* : \Div( S_{K^C} ) \to \Div(S)$ is the homomorphism induced by $\pi = \pi_{K^C}$.
It is clear that $\bbL_C$ is a linear system on $S$, that $\dim\bbL_C = \dim | \widetilde C |$, and that $C \in \bbL_C$.
\end{definition}

\begin{lemma}  \label {bbbb3982edu}
Let $C \subset S$ be a rational curve.
\begin{enumerata}

\item $\dim\bbL_C \ge 1$ $\Leftrightarrow$ $\tilde\nu(C)\ge0$

\item If $\tilde\nu(C)\ge0$, then $\dim\bbL_C = \tilde\nu(C) + 1$.

\item If $\tilde\nu(C)\ge0$, then every
linear system $\bbL$ on $S$ satisfying $C \in \bbL \subseteq \bbL_C$ and $\dim\bbL\ge1$ is a
rational linear system.

\end{enumerata}
\end{lemma}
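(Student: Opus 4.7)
The plan is to exploit the fact that $\pi = \pi_{K^C}$ induces a projective isomorphism between $|\widetilde C|$ and $\bbL_C$, so that everything reduces to studying $|\widetilde C|$ on $S_{K^C}$, where $\widetilde C$ is a nonsingular rational curve (because $C$ is rational and $\pi$ is the minimal resolution of singularities of $C$) with $\widetilde C^2 = \tilde\nu(C)$. At that point, Lemma~\ref{dskjfwejf;akj} applies directly.

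Before proving (a)-(c), I would establish two preliminary facts: (i) $\bbL_C \subseteq |C|$, because writing a general element of $|\widetilde C|$ as $\widetilde C + \div(f)$ gives $\pi_*(\widetilde C + \div(f)) = C + \div(f) \in |C|$; and (ii) $\pi_* : |\widetilde C| \to \bbL_C$ is a bijection (hence a projective isomorphism, so $\dim\bbL_C = \dim|\widetilde C|$). Surjectivity is by definition; injectivity is the delicate point: if $D_1'',D_2''\in|\widetilde C|$ share the same pushforward, then $D_1''-D_2''$ is both exceptional for $\pi$ and principal, and exceptional divisors on the blowup of a smooth surface are linearly independent modulo principal, so $D_1'' = D_2''$. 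This is really the only nonformal step in the whole argument.

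For (a) and (b): If $\tilde\nu(C)\ge 0$, then $\widetilde C \isom \proj^1$ with $\widetilde C^2 \ge 0$, so Lemma~\ref{dskjfwejf;akj}(a) gives $\dim|\widetilde C| = \tilde\nu(C)+1 \ge 1$, hence $\dim\bbL_C = \tilde\nu(C)+1$, proving (b) and the ``$\Leftarrow$'' of (a). Conversely, if $\tilde\nu(C)<0$, then any effective $D \sim \widetilde C$ satisfies $D\cdot\widetilde C = \tilde\nu(C) < 0$, forcing $\widetilde C \subseteq \supp D$; writing $D = \widetilde C + D'$ with $D'$ effective and $D'\sim 0$ yields $D'=0$. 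Thus $|\widetilde C| = \{\widetilde C\}$, so $\dim\bbL_C = 0$, giving the ``$\Rightarrow$'' of (a).

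For (c): Pull $\bbL$ back along the isomorphism in (ii) to a linear subsystem $\bbL' \subseteq |\widetilde C|$ of dimension $\ge 1$ with $\widetilde C \in \bbL'$ (since $\pi_*\widetilde C = C \in \bbL$). Apply Lemma~\ref{dskjfwejf;akj}(b) on the rational surface $S_{K^C}$, with $\widetilde C$ and $\bbL'$ playing the roles of $C$ and $\bbL$: the general member $D''$ of $\bbL'$ is a nonsingular rational curve. Since $D''^2 = \widetilde C^2 = \tilde\nu(C) \ge 0$ and every irreducible component of the exceptional locus of $\pi$ has negative self-intersection, $D''$ is not contained in that exceptional locus, so $\pi_*(D'')$ is an irreducible curve birational to $D''$, hence irreducible rational. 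Since a general member of $\bbL$ is of the form $\pi_*(D'')$ for a general $D''\in\bbL'$, the system $\bbL$ is rational, as required.
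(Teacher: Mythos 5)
Your proof is correct and follows essentially the same route as the paper: reduce everything to the nonsingular rational curve $\widetilde C$ on $S_{K^C}$ via $\pi_*$ and apply Lemma~\ref{dskjfwejf;akj}. The only difference is that you spell out details the paper treats as immediate (the injectivity of $\pi_*$ on $|\widetilde C|$ behind $\dim\bbL_C=\dim|\widetilde C|$, the case $\tilde\nu(C)<0$ behind ``$\dim|\widetilde C|\ge1$ implies $\widetilde C^2\ge0$'', and the irreducibility of the pushforward of a general member in (c)).
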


\begin{proof}
Let the notation
($\pi = \pi_{K^C} : S_{K^C} \to S$, 
$\pi_* : \Div( S_{K^C} ) \to \Div(S)$ and $\widetilde C = \widetilde C^{K^C} \subset S_{K^C}$)
be as in \ref{minimalresolsing}.
We have
$\dim\bbL_C = \dim | \widetilde C |$ and $\tilde\nu(C) = {\widetilde C}^2$,
so assertion (b) follows by applying \ref{dskjfwejf;akj}(a) to the nonsingular curve $\widetilde C$.
Part ``$\Leftarrow$'' of (a) follows immediately, and the converse is the observation that
$\dim| \widetilde C | \ge 1$ implies ${\widetilde C}^2\ge0$.

To prove (c), suppose that $\tilde\nu(C)\ge0$ and
consider a linear system $\bbL$ on $S$ satisfying $C \in \bbL \subseteq \bbL_C$ and $\dim\bbL\ge1$.
Then there exists a linear system $\bbL'$ on $S_{K^C}$ satisfying $\widetilde C \in \bbL'$ and $\pi_*(\bbL')=\bbL$.
Since $\widetilde C^2 = \tilde\nu(C)\ge0$,
\ref{dskjfwejf;akj}(b) implies that the general member of $\bbL'$
is a rational curve; so the general member of  $\pi_*(\bbL') = \bbL$ is a rational curve.
\end{proof}

\begin{proposition} \label {dpfioupq9034r2093iej}
Let $C \subset S$ be a curve and suppose that $\bbL$ is a rational linear
system on $S$ (cf.~\ref{88800112h}) satisfying $C \in \bbL$.
Then the following hold.
\begin{Enumerata}{1mm}

\item $C$ is a rational curve of nonnegative type.

\item $\widetilde C^{K_{\bbL}}  \in  \widetilde\bbL^{K_{\bbL}}$
and $\widetilde C^{K_{\bbL}}$ is nonsingular.

\item  $C$ goes through $\Keul_\bbL$ effectively.

\item The general member $D$ of $\bbL$ satisfies $\Keul^C = \Keul^D$.

\item $\bbL \subseteq \bbL_C$ 

\item $K^C \subseteq K_\bbL$ and
${\tilde \nu}(C) = ( \widetilde C^{K_\bbL} )^2 + | K_\bbL \setminus K^C | \ge | K_\bbL \setminus K^C |$.

\end{Enumerata}
\end{proposition}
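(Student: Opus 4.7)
My plan is to deduce (1)--(6) from two main ingredients. The first is the fact from \ref{dfiupqwe;lkdj}(b) that the general member $D \in \bbL$ is an irreducible rational curve with $\Keul^D = \Keul_\bbL^{(>1)}$ and nonsingular strict transform $\widetilde D^{K_\bbL} \in \widetilde\bbL^{K_\bbL}$. The second is the classical formula $p_a(E) = g(E) + \sum_{Q \in K^E}\tbinom{e_Q(E)}{2}$ relating arithmetic and geometric genera of any irreducible reduced curve $E$ on a nonsingular surface through its multiplicities at infinitely near points.

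Since $D \sim C$, we have $p_a(C) = p_a(D)$. The rationality of $D$ gives
\[
p_a(D) \;=\; \sum_{P \in K_\bbL^{(>1)}} \tbinom{e_P(\bbL)}{2} ,
\]
while the formula applied to $C$ gives $p_a(C) = g(C) + \sum_{P \in K^C}\tbinom{e_P(C)}{2}$. The key inequality $e_P(C)\ge e_P(\bbL)$ for every $P\in S^*$ is implicit in the equivalence of (b) and (c) in \ref{;sdkfj9238923-iklj}, because the implication (c)$\Rightarrow$(b) would fail for any $D\in\bbL$ having $e_P(D)<e_P(\bbL)$ at some $P$. Using this, $K_\bbL^{(>1)}\subseteq K^C$, and after rearrangement the identity $p_a(C)=p_a(D)$ becomes
\[
g(C) + \sum_{P \in K^C \setminus K_\bbL^{(>1)}} \tbinom{e_P(C)}{2} + \sum_{P \in K_\bbL^{(>1)}} \Big[\tbinom{e_P(C)}{2}-\tbinom{e_P(\bbL)}{2}\Big] \;=\; 0,
\]
in which every summand is nonnegative, so each must vanish. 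This forces $g(C)=0$ (so $C$ is rational), $K^C=K_\bbL^{(>1)}$, and $e_P(C)=e_P(\bbL)$ for $P\in K_\bbL^{(>1)}$. For $P\in K_\bbL$ with $e_P(\bbL)=1$, having $P\notin K^C$ gives $e_P(C)\le 1$, and combined with $e_P(C)\ge 1$ we obtain $e_P(C)=e_P(\bbL)=1$. Hence $e_P(C)=e_P(\bbL)$ for all $P\in K_\bbL$, which by \ref{;sdkfj9238923-iklj} yields (3) and the membership $\widetilde C^{K_\bbL}\in\widetilde\bbL^{K_\bbL}$; combined with $K^C\subseteq K_\bbL$ and \eqref{dofj[q293i[lka;dfk;;}, the curve $\widetilde C^{K_\bbL}$ is nonsingular, giving (2). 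Finally $\Keul^C=\Keul_\bbL^{(>1)}=\Keul^D$ gives (4).

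For (6), I will track the self-intersection along the factorization $S_{K_\bbL}\to S_{K^C}\to S$: since every $P\in K_\bbL\setminus K^C$ satisfies $e_P(C)=1$, each of the $|K_\bbL\setminus K^C|$ additional blow-ups lies on the strict transform of $\widetilde C$ and drops its self-intersection by exactly~$1$, yielding $\tilde\nu(C)=(\widetilde C^{K_\bbL})^2+|K_\bbL\setminus K^C|$. Since $\widetilde C^{K_\bbL}$ lies in the base-point-free linear system $\widetilde\bbL^{K_\bbL}$ of positive dimension, $(\widetilde C^{K_\bbL})^2\ge 0$, which completes (6) and the nonnegative-type half of~(1). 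For (5), the membership $\widetilde C^{K_\bbL}\in\widetilde\bbL^{K_\bbL}$ gives $\widetilde\bbL^{K_\bbL}\subseteq|\widetilde C^{K_\bbL}|$; pushing forward by $\pi_{K_\bbL\setminus K^C,*}$ lands inside $|\widetilde C|$ on $S_{K^C}$, and a further push-forward by $\pi_{K^C,*}$ identifies $\bbL$ as a subset of $\bbL_C$.

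The main obstacle I anticipate is the careful justification of the multiplicity inequality $e_P(C)\ge e_P(\bbL)$ at infinitely near $P$, along with the bookkeeping needed for the self-intersection identity in (6): both require a precise understanding of how blow-ups at points of $K_\bbL\setminus K^C$ interact with the strict transform of $C$ on the intermediate surface $S_{K^C}$, and a genuine induction on the partial order of $S^*$ is likely needed to extract the multiplicity comparison from the equivalence in \ref{;sdkfj9238923-iklj} without circularity.
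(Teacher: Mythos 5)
Your argument hinges on the pointwise inequality $e_P(C)\ge e_P(\bbL)$ for every $P\in S^*$, which you try to extract from the equivalence (b)$\Leftrightarrow$(c) in \ref{;sdkfj9238923-iklj}. That extraction does not work, and the inequality itself is false for a general member of a general linear system. What (b)$\Leftrightarrow$(c) says is that the inequalities $e_P(D)\le e_P(\bbL)$ cannot hold \emph{simultaneously for all $P$ in a cluster} with one of them strict; it is perfectly compatible with $e_P(D)<e_P(\bbL)$ at some infinitely near $P$, provided $e_Q(D)>e_Q(\bbL)$ at some $Q<P$. Concretely: in a pencil of conics tangent to a line $\ell$ at $Q$ (base cluster $\{Q,P\}$ with $P>Q$ in the direction of $\ell$, both weights $1$), the degenerate member consisting of the two lines joining $Q$ to the other base points has $e_Q=2>1$ and $e_P=0<1$; irreducible examples with the same behaviour occur in pencils of cubics with an assigned tangent direction at a base point, where a member can acquire a node at $Q$ with neither branch tangent to the assigned direction. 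At proper points the inequality is automatic, but at infinitely near points it is exactly the content of parts (b)--(c) of the proposition you are proving --- so taking it as an input makes the argument circular. Without it, your genus identity $g(C)+\sum_{P\in K^C}\binom{e_P(C)}{2}=\sum_{P\in K_\bbL^{(>1)}}\binom{e_P(\bbL)}{2}$ cannot be rearranged into a sum of nonnegative terms, and you cannot conclude $g(C)=0$ or $\Keul^C=\Keul_\bbL^{(>1)}$.

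This is precisely the point where the paper brings in an input your proof lacks: it chooses a rational pencil $\Lambda$ with $C\in\Lambda\subseteq\bbL$, passes to the minimal resolution of its base points so that $\widetilde\Lambda^{K_\Lambda}$ becomes a $\proj^1$-ruling, and uses Gizatullin's Theorem~\ref{jhdhsgghgaqaqoaiaaqo} on the structure of degenerate fibres, together with property \eqref{sdfjpqw88888} (exceptional $(-1)$-curves over base points are horizontal), to show that the fibre containing $\widetilde C^{K_\Lambda}$ is $1\cdot\widetilde C^{K_\Lambda}$ itself. That single geometric fact yields at once that $\widetilde C^{K_\Lambda}$ is nonsingular, rational, of square $0$, and a member of $\widetilde\Lambda^{K_\Lambda}$, from which all the multiplicity equalities (and then (a)--(f)) follow by the formal lemmas. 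Your bookkeeping for (6), the base-point-freeness argument for $(\widetilde C^{K_\bbL})^2\ge0$, and the pushforward argument for (5) are all fine \emph{once} (2)--(3) are known, but the core of the proposition is not reachable by the genus computation alone. To repair the proof you would need either the Gizatullin-type argument or a genuine ``virtual versus effective'' comparison theorem for curves going through a weighted cluster (Noether-type inequalities in the sense of \cite{Alberich}), neither of which is supplied by \ref{;sdkfj9238923-iklj}.
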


\begin{proof}
There is a nonempty Zariski-open subset $U$ of $\bbL$ such that every
element of $U$ is an irreducible rational curve.
Pick a pencil $\Lambda \subseteq \bbL$ such that $C \in \Lambda$ and
$\Lambda \cap U \neq \emptyset$;
then $\Lambda$ is a rational pencil.
Let $\pi_{ K_{ \Lambda } } : S_{ K_{\Lambda} } \to S$
be the minimal resolution of the base points of $\Lambda$.
Then $\widetilde\Lambda^{K_{ \Lambda }}$ is a $\proj^1$-ruling
and $\widetilde C^{K_{ \Lambda }}$ is included in the support of an element
of $\widetilde\Lambda^{K_{ \Lambda }}$,
so Gizatullin's Theorem~\ref{jhdhsgghgaqaqoaiaaqo}
implies that $\widetilde C^{K_{ \Lambda }}$ is rational
(so $C$ is rational) and nonsingular (so $K^C \subseteq K_{\Lambda}$ 
by \eqref{dofj[q293i[lka;dfk;;}).
Let $F \in \widetilde\Lambda^{K_{\Lambda}}$
be the element such that $\widetilde C^{K_{\Lambda}} \subseteq \supp(F)$.
The fact that $1C \in \Lambda$ implies that
\begin{equation} \label{llqlqlqllqllqlqq}
F = 1 \widetilde C^{K_{\Lambda}}
+  \sum_{P \in I} a_P \widetilde E_P^{K_{\Lambda}}
\end{equation}
for some subset $I \subseteq K_\Lambda$ and where $a_P\ge1$ for 
all $P \in I$.

We claim that $I = \emptyset$.  Indeed, suppose the contrary.
Then $\supp(F)$ is reducible, so Gizatullin's Theorem implies that 
$\supp(F)$ has a $(-1)$-component $\Gamma$, and that if $\Gamma$
has multiplicity $1$ in $F$ then $\Gamma$ is not the only $(-1)$-component
of $\supp(F)$. This together with \eqref{llqlqlqllqllqlqq} imply that there
exists $P \in I$ such that $(\widetilde E_P^{K_{\Lambda}})^2 = -1$;
as $P\in K_{\Lambda}$ and $\widetilde E_P^{K_{\Lambda}}$ is vertical,
this contradicts \eqref{sdfjpqw88888}, and proves that $I=\emptyset$.
So:
\begin{equation} \label{oioqoiqoqoiqioqiqoi}
\widetilde C^{K_{\Lambda}}  \in  \widetilde\Lambda^{K_{\Lambda}} .
\end{equation}
It follows that
$(\widetilde C^{K_{\Lambda}})^2 = 0$  in $S_{K_{\Lambda}}$,
because $\widetilde\Lambda^{K_{\Lambda}}$ is a base-point-free pencil.
As $\widetilde C^{K_{\Lambda}}$ is also nonsingular,
$C$ is of nonnegative type and (a) is proved.

Since $\widetilde C^{K_{\Lambda}}  \in  \widetilde\Lambda^{K_{\Lambda}}$ and 
$\widetilde C^{K_{\Lambda}}$ is nonsingular, \ref{dfiupqwe;lkdj}
implies that $\Keul^C = \Keul_{\Lambda}^{(>1)}$.
As the general member $D$ of $\Lambda$ satisfies 
$\Keul^D = \Keul_{\Lambda}^{(>1)}$ by \ref{dfiupqwe;lkdj}, we get $\Keul^D = \Keul^C$.
So we have shown that, for any pencil
$\Lambda$ satisfying $C \in \Lambda \subseteq \bbL$
and $\Lambda \cap U \neq \emptyset$, the general member $D$ of $\Lambda$
satisfies $\Keul^D = \Keul^C$.
Consequently, $\setspec{ D \in \bbL }{ \Keul^D = \Keul^C }$ is a dense
subset of $\bbL$; together with the fact (\ref{dfiupqwe;lkdj}) that 
$\Keul^D = \Keul_\bbL^{(>1)}$ for general $D \in \bbL$, this implies
\begin{equation}  \label {difuq903erq98493`2}
\Keul^C = \Keul_\bbL^{(>1)} .
\end{equation}
Then assertions (b) and (d) follow from \eqref{difuq903erq98493`2} and
\ref{dfiupqwe;lkdj}, and assertion (c) follows from
$\widetilde C^{K_{\bbL}}  \in  \widetilde\bbL^{K_{\bbL}}$
and \ref{;sdkfj9238923-iklj}.
By  \eqref{difuq903erq98493`2} we have
$e_P(C) = e_P( \bbL )$ for all $P \in K^C$; this together with \ref{;sdkfj9238923-iklj} implies that
$\widetilde C^{K^{C}}  \in  \widetilde\bbL^{K^{C}}$, hence
$\widetilde\bbL^{K^{C}} \subseteq | \widetilde C^{K^{C}} |$.
It follows that $\pi_*\big( \widetilde\bbL^{K^{C}} \big) \subseteq \pi_*\big( | \widetilde C^{K^{C}} | \big)$,
where $\pi_* : \Div( S_{K^C} ) \to \Div(S)$ is the homomorphism induced by $\pi = \pi_{K^C} : S_{K^C} \to S$.
As $\pi_*\big( \widetilde\bbL^{K^{C}} \big) = \bbL$
and (by definition) $\pi_*\big( | \widetilde C^{K^{C}} | \big) = \bbL_C$, (e) is true.

We have $K^C \subseteq K_\bbL$ by \eqref{difuq903erq98493`2}, and (c) implies that
$e_P(C)=1$ for all $P \in K_\bbL \setminus K^C$.
Consequently, 
${\tilde \nu}(C) = ( \widetilde C^{K^C} )^2
= ( \widetilde C^{K_\bbL} )^2 + | K_\bbL \setminus K^C |$.
Pick any $D \in \widetilde\bbL^{K_{\bbL}} \setminus \{
\widetilde C^{K_{\bbL}}  \}$, then 
$( \widetilde C^{K_\bbL} )^2  = 
\widetilde C^{K_\bbL} \cdot D \ge0$ and (f) is proved.
\end{proof}

\begin{theorem}   \label {yyvyvyvyvydyddsdfy}
For a curve $C \subset S$, the following are equivalent:
\begin{enumerata}

\item $\bbL_C$ is a rational linear system;

\item there exists a rational linear system $\bbL$ on $S$
such that $C \in \bbL$;

\item $C$ is rational and $\dim\bbL_C \ge 1$;

\item $C$ is rational and ${\tilde \nu}(C) \ge 0$.

\end{enumerata}
Moreover, if conditions \mbox{\rm (a--d)}
are satisfied then the following hold:

\begin{enumerata}
\addtocounter{enumi}{4}

\item For a linear system $\bbL$ on $S$ satisfying
$C \in \bbL$ and $\dim \bbL \ge1$, 
$$
\text{$\bbL$ is rational } \iff  \bbL \subseteq \bbL_C .
$$

\item  $\dim\bbL_C = {\tilde \nu}(C)+1$, $\Keul_{\bbL_C} = \Keul^C$
and $\widetilde\bbL_C^{K^C} = | \widetilde C^{K^C} |$.

\end{enumerata}
\end{theorem}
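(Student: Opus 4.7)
The plan is to reduce almost everything to Lemma~\ref{bbbb3982edu} and Proposition~\ref{dpfioupq9034r2093iej}, with a single dimension-counting step doing the real work.

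For the equivalence of (a)--(d), I would run the short cycle (a)$\Rightarrow$(b)$\Rightarrow$(d)$\Leftrightarrow$(c)$\Rightarrow$(a). The first implication is immediate on taking $\bbL=\bbL_C$; (b)$\Rightarrow$(d) is Proposition~\ref{dpfioupq9034r2093iej}(a); (d)$\Leftrightarrow$(c) is Lemma~\ref{bbbb3982edu}(a); and for (c)$\Rightarrow$(a), I extract $\tilde\nu(C)\ge 0$ from $\dim\bbL_C\ge 1$ by Lemma~\ref{bbbb3982edu}(a), then apply Lemma~\ref{bbbb3982edu}(c) with $\bbL=\bbL_C$ to conclude that $\bbL_C$ is itself rational. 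Assuming now (a)--(d), assertion (e) requires no new work: ($\Rightarrow$) is Proposition~\ref{dpfioupq9034r2093iej}(e) and ($\Leftarrow$) is Lemma~\ref{bbbb3982edu}(c). The first clause of (f), $\dim\bbL_C=\tilde\nu(C)+1$, is Lemma~\ref{bbbb3982edu}(b).

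The genuine obstacle, and all that remains, is the identity $\Keul_{\bbL_C}=\Keul^C$; the third clause $\widetilde\bbL_C^{K^C}=|\widetilde C^{K^C}|$ will fall out along the way. My plan is to first establish the set-theoretic equality $K_{\bbL_C}=K^C$ by a dimension squeeze, then match weights via the ``effective passage'' provided by Proposition~\ref{dpfioupq9034r2093iej}(c). Applying Proposition~\ref{dpfioupq9034r2093iej} to the rational system $\bbL_C$ yields $K^C\subseteq K_{\bbL_C}$, the membership $\widetilde C^{K_{\bbL_C}}\in\widetilde\bbL_C^{K_{\bbL_C}}$, the nonsingularity (hence, with rationality of $C$, $\widetilde C^{K_{\bbL_C}}\isom\proj^1$), and the relation
\[
\tilde\nu(C)\;=\;(\widetilde C^{K_{\bbL_C}})^2+|K_{\bbL_C}\setminus K^C|\quad\text{with}\quad (\widetilde C^{K_{\bbL_C}})^2\ge 0.
\]
Since all members of a linear system are linearly equivalent, $\widetilde\bbL_C^{K_{\bbL_C}}\subseteq |\widetilde C^{K_{\bbL_C}}|$; together with Lemma~\ref{dskjfwejf;akj}(a) applied to $\widetilde C^{K_{\bbL_C}}$ this gives the chain
\[
\tilde\nu(C)+1\;=\;\dim\bbL_C\;=\;\dim\widetilde\bbL_C^{K_{\bbL_C}}\;\le\;\dim|\widetilde C^{K_{\bbL_C}}|\;=\;(\widetilde C^{K_{\bbL_C}})^2+1\;\le\;\tilde\nu(C)+1,
\]
which forces $|K_{\bbL_C}\setminus K^C|=0$ (hence $K_{\bbL_C}=K^C$) and simultaneously forces $\widetilde\bbL_C^{K^C}=|\widetilde C^{K^C}|$. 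Finally, Proposition~\ref{dpfioupq9034r2093iej}(c) applied to $\bbL_C$ shows that $C$ goes through $\Keul_{\bbL_C}$ effectively, so $e_P(C)=e_P(\bbL_C)$ for every $P\in K_{\bbL_C}=K^C$; the weights therefore match, $\Keul_{\bbL_C}=\Keul^C$, and (f) is complete. The only step that is not a direct citation is the dimension squeeze above, and that is where I expect the whole difficulty to lie.
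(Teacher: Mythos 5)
Your proposal is correct, and for the equivalence (a)--(d), for (e), and for the first clause of (f) it follows the paper essentially verbatim (the paper runs (b)$\Rightarrow$(c), (c)$\Leftrightarrow$(d), (c,d)$\Rightarrow$(a), (a)$\Rightarrow$(b), but this is the same content). Where you genuinely diverge is the remaining part of (f). The paper first proves $\widetilde\bbL_C^{K^C}=|\widetilde C^{K^C}|$ by a uniqueness argument: there is at most one linear system $\bbG$ on $S_{K^C}$ with irreducible general member and $\pi_*(\bbG)=\bbL_C$, and both candidates qualify; it then deduces that $\widetilde\bbL_C^{K^C}$ is base-point-free, hence $K_{\bbL_C}\subseteq K^C$, and combines this with $\Keul^C=\Keul_{\bbL_C}^{(>1)}$ (from \ref{dpfioupq9034r2093iej}(b) and \ref{dfiupqwe;lkdj}) to get $\Keul_{\bbL_C}=\Keul^C$. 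You instead work on $S_{K_{\bbL_C}}$ and squeeze: $\tilde\nu(C)+1=\dim\widetilde\bbL_C^{K_{\bbL_C}}\le\dim|\widetilde C^{K_{\bbL_C}}|=(\widetilde C^{K_{\bbL_C}})^2+1\le\tilde\nu(C)+1$, forcing $K_{\bbL_C}=K^C$ and, since a linear subsystem of a complete system of the same finite dimension is the whole system, also $\widetilde\bbL_C^{K^C}=|\widetilde C^{K^C}|$; you then match weights via \ref{dpfioupq9034r2093iej}(c). Both arguments are sound and of comparable length; yours gets more mileage out of \ref{dpfioupq9034r2093iej}(f) and \ref{dskjfwejf;akj}(a) and avoids the (mild, but unproved in the paper) uniqueness observation about $\bbG$, at the cost of invoking the dimension identity $\dim\bbL_C=\dim\widetilde\bbL_C^{K_{\bbL_C}}$ and the fact that $\widetilde C^{K_{\bbL_C}}\isom\proj^1$, both of which you justify correctly.
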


\begin{proof}
Suppose that $\bbL$ is a rational linear system on $S$ such that $C \in \bbL$.
By \ref{dpfioupq9034r2093iej},
we obtain that $C$ is rational and that $\bbL \subseteq \bbL_C$ (so $\dim\bbL_C \ge 1$).
So (b) implies (c),
and this also proves implication ``$\Rightarrow$'' in statement~(e).

Equivalence (c)$\Leftrightarrow$(d) is \ref{bbbb3982edu}(a), 
implication (c and d)$\Rightarrow$(a) is the case $\bbL=\bbL_C$ of \ref{bbbb3982edu}(c),
and (a)$\Rightarrow$(b) is obvious.  So (a--d) are equivalent.

Now assume that (a--d) are satisfied.
Implication ``$\Leftarrow$'' in statement (e) is a consequence of \ref{bbbb3982edu}(c),
so there only remains to prove (f).
Equality $\dim\bbL_C = {\tilde \nu}(C)+1$ is \ref{bbbb3982edu}(b).
Observe that there can be at most one linear system $\bbG$ on $S_{K^C}$ satisfying
\begin{center}
the general member of $\bbG$ is irreducible and $\pi_*(\bbG) = \bbL_C$,
\end{center}
where $\pi = \pi_{K^C} : S_{K^C} \to S$;
as $\widetilde\bbL_C^{K^C}$ and  $| \widetilde C^{K^C} |$ are two such linear systems,
we get $\widetilde\bbL_C^{K^C} = | \widetilde C^{K^C} |$. This implies
that $\widetilde\bbL_C^{K^C}$ is base-point-free (because $| \widetilde C^{K^C} |$ is base-point-free by
\ref{dskjfwejf;akj}), so all base points of $\bbL_C$ are in $K^C$, i.e., $K_{\bbL_C} \subseteq K^C$. 
On the other hand, \ref{dpfioupq9034r2093iej}(b) together with \ref{dfiupqwe;lkdj} gives
$\Keul^C = \Keul_{\bbL_C}^{(>1)}$ (which was also noted in \eqref{difuq903erq98493`2}); this and 
$K_{\bbL_C} \subseteq K^C$ imply $\Keul_{\bbL_C} = \Keul^C$, which completes the proof.
\end{proof}


\end{document}